\theoremstyle{definition}
\newtheorem{thm}{Theorem}[section]
\newtheorem{cor}[thm]{Corollary}
\newtheorem{rmk}[thm]{Remark}
\newtheorem{prop}[thm]{Proposition}
\numberwithin{equation}{section}
\address{National Taiwan University, Department of Mathematics\\
No.1, Sec. 4, Roosevelt Rd., Da'an Dist., Taipei City, Taiwan, 10617}
\email{jechang@ntu.edu.tw}
\title{Self similar solutions to the inverse mean curvature flow in $\mathbb{R}^2$}
\author{Jui-En Chang} 
\date{\today}
\begin{document}
\maketitle
\begin{abstract} 
In this paper, we obtain a complete list of all self-similar solutions of inverse mean curvature flow in $\mathbb{R}^2$.
\end{abstract}

\section{Introduction}
The inverse mean curvature flow is given by
\begin{equation}
\frac{dx}{dt}=\frac{1}{H}N,
\end{equation}
where $N$ is a unit normal vector to the surface and the mean curvature $H$ is defined as the trace of $\nabla N$. This flow is related to several different important topics. Huisken and Ilmanen \cite{HI2}, \cite{HI3} use the inverse mean curvature flow to establish the Riemannian Penrose inequality. Bray and Neves \cite{BN} also use inverse mean curvature flow to proof the Poincare conjecture for certain 3-manifolds.

The solutions which move by expanding outward, which are called self-expanders, are studied extensively. Drugan, Lee, and Wheeler \cite{DLW} give an explicit classification of the one-dimensional homothetic solitons and establish that families of cycloids are the only translating solitons. For the hypersurfaces in $\mathbb{R}^n$, Huisken and Ilmanen in \cite{HI} used a phase-plane analysis to establish homothetic solitons which are topological hyperplanes. In \cite{DLW}, Drugan, Lee, and Wheeler also construct rotationally symmetric hypercylinder expanders and the infinite bottles, which is asymptotic to coaxial cylinders with different radius at each end. The rigidity of the infinite bottle is established in by Drugan, Fong and Lee in \cite{DFL}. For the higher codimension homothetic solutions, Castro and Lerma \cite{CL} establish that the compact soliton should be minimally embedded in a sphere. They also study the rotationally invariant Lagrangian homothetic solitons.

In $\mathbb{R}^2$, the self-similar motions are translating, shrinking, expanding, rotating and a combination of the above. The previous work by Drugan, Lee and Wheeler \cite{DLW} covers translation and scaling(shrinking and expanding). Up to now, the solutions concerning general self-similar motions in $\mathbb{R}^2$ are not studied. Here, we study the solutions which flow by any self-similar motions. In this paper, up to rotation and scaling, we find explicitly the self similar solutions corresponds to each self-similar motion.

In order to make the expression of rotation simpler, from now on, we identify $\mathbb{R}^2$ with $\mathbb{C}$. Therefore, the rotation can be understand as multiplying a complex number with unit absolute value and the scaling can be understand as multiplying a positive real number.

\begin{thm}
If a solution of the inverse mean curvature flow moves by scaling and rotation, there exists $c,d$ such that it can be expressed as $e^{(d+ci)t}x(s)$ for some initial curve $x(s)$ satisfying
\begin{equation}
c\langle x,T\rangle-d\langle x,N\rangle=\frac{1}{k}.
\end{equation}
\end{thm}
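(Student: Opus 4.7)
The plan is to substitute the self-similar ansatz into the IMCF equation and use the complex-multiplication description of scaling and rotation to reduce the evolution PDE to an ODE for the initial curve. The phrase \emph{moves by scaling and rotation} means that the solution is obtained from its initial position by the action of a one-parameter subgroup of the similarity group of $\mathbb{R}^2$ fixing the origin; identifying that group with $\mathbb{C}^*$, every such subgroup has the form $t\mapsto e^{(d+ci)t}$ for unique real $d,c$, which forces $X(s,t)=e^{(d+ci)t}x(s)$ with $x(s)=X(s,0)$. I would parametrize $x$ by arc length so that $T(s)=x'(s)$, and write $N(s)$, $k(s)$ for the unit normal and curvature of $x$.

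Next I would express every time-$t$ geometric quantity in terms of the initial data, using that $z\mapsto e^{(d+ci)t}z$ is a conformal similarity. Rotation by the angle $ct$ multiplies unit tangents and normals by $e^{ict}$, and scaling preserves directions, so $T_t=e^{ict}T$ and $N_t=e^{ict}N$. The curvature transforms as $k_t=e^{-dt}k$, because rotation leaves it invariant and scaling by $e^{dt}$ divides it by $e^{dt}$. The velocity is $\partial_t X=(d+ci)X=(d+ci)e^{(d+ci)t}x$. Imposing the IMCF equation reduces to the single scalar condition $\langle\partial_t X,N_t\rangle=1/k_t$ on the normal component; the tangential component of $\partial_t X$ merely reparametrizes the moving curve and imposes no constraint.

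Using $\langle a,b\rangle=\mathrm{Re}(a\bar b)$ on $\mathbb{C}$, the left-hand side becomes $\mathrm{Re}\bigl((d+ci)e^{(d+ci)t}x\cdot e^{-ict}\bar N\bigr)=e^{dt}\,\mathrm{Re}\bigl((d+ci)x\bar N\bigr)$, while the right-hand side is $e^{dt}/k$; the factor $e^{dt}$ cancels, producing a time-independent identity in $x$. Using $N=\pm iT$ gives $x\bar N=\langle x,N\rangle\mp i\langle x,T\rangle$, and expanding $(d+ci)x\bar N$ and taking real parts yields a linear combination of $\langle x,T\rangle$ and $\langle x,N\rangle$ equal to $1/k$; with the orientation and curvature conventions used in the paper this combination is precisely $c\langle x,T\rangle-d\langle x,N\rangle$. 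None of the steps is individually difficult; the only point requiring genuine attention is consistent bookkeeping of the orientation convention for $N$ relative to $T$ and of the sign of $k$, since these together pin down the signs in the final identity.
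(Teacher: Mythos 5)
Your overall strategy --- substitute the self-similar ansatz into the flow equation, use that multiplication by $e^{(d+ci)t}$ rotates $T$ and $N$ by $e^{ict}$ and scales the curvature by $e^{-dt}$, and cancel the common factor $e^{dt}$ to get a $t$-independent identity for the initial curve --- is essentially the computation the paper performs, and your handling of the normal component and the sign bookkeeping is acceptable for a sketch. The gap is at the very first step: you interpret ``moves by scaling and rotation'' as meaning that the motion is given by a one-parameter \emph{subgroup} of $\mathbb{C}^*$, and you invoke the classification of such subgroups to ``force'' $X(s,t)=e^{(d+ci)t}x(s)$. But the exponential form of the motion is part of the \emph{conclusion} of the theorem, not of its hypothesis. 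The paper's hypothesis is only that $x(s,t)=\mathbf{S}(t)e^{i\mathbf{R}(t)}x(s)$ for some functions $\mathbf{S}:\mathbb{R}\to\mathbb{R}^+$, $\mathbf{R}:\mathbb{R}\to\mathbb{R}$ with $\mathbf{S}(0)=1$, $\mathbf{R}(0)=0$; nothing there gives the cocycle identity $\mathbf{S}(t+t')e^{i\mathbf{R}(t+t')}=\mathbf{S}(t)e^{i\mathbf{R}(t)}\,\mathbf{S}(t')e^{i\mathbf{R}(t')}$ that your appeal to one-parameter subgroups requires (a priori $\mathbf{S}(t)=1+t^2$ is a perfectly good ``motion by scaling'').

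The paper closes this gap using the flow equation itself: carrying out the same computation with general $\mathbf{S},\mathbf{R}$ yields $-\frac{1}{k(s)}=\frac{\mathbf{S}'(t)}{\mathbf{S}(t)}\langle x(s),N(s)\rangle-\mathbf{R}'(t)\langle x(s),T(s)\rangle$, and since the left-hand side is independent of $t$, the coefficients $\mathbf{R}'(t)$ and $\mathbf{S}'(t)/\mathbf{S}(t)$ must be constants $c$ and $d$ (at least when $\langle x,N\rangle$ and $\langle x,T\rangle$ are independent as functions of $s$; the degenerate case is addressed in the remark following the paper's proof), whence $\mathbf{R}(t)=ct$, $\mathbf{S}(t)=e^{dt}$ and $\mathbf{S}(t)e^{i\mathbf{R}(t)}=e^{(d+ci)t}$. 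To repair your argument, either state explicitly that you are strengthening the hypothesis to a group action, or redo the first step with general $\mathbf{S}(t)$, $\mathbf{R}(t)$ and extract the constancy of the coefficients from the $t$-independence of $1/k$ as above.
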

We can think of the constants $c$ and $d$ in the theorem as the rotation speed and the expanding speed of the solution.

\begin{prop}
If a solution of the inverse mean curvature flow moves by scaling and rotation with speed $d$ and $c$ respectively, any rotation and scaling is also a self similar solution with the same scaling and rotation speed.
\end{prop}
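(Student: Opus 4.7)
My plan is to invoke the characterization from the preceding theorem: a self-similar solution with scaling speed $d$ and rotation speed $c$ is precisely one whose initial curve $x(s)$ satisfies
\begin{equation*}
c\langle x,T\rangle - d\langle x,N\rangle = \frac{1}{k}.
\end{equation*}
So it will suffice to verify that if $x$ satisfies this equation, then $\tilde x(s) := \lambda e^{i\theta} x(s)$ satisfies the same equation with the same constants $c,d$ for every $\lambda>0$ and $\theta\in\mathbb R$. Once this is done, the theorem will produce a self-similar solution with $\tilde x$ as initial curve and the same speeds $(d,c)$, and that solution is visibly $\lambda e^{i\theta}$ times the original, namely $e^{(d+ci)t}\tilde x(s) = \lambda e^{i\theta}\bigl(e^{(d+ci)t} x(s)\bigr)$.

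The core step is to track how the geometric quantities transform. Because rotation is an isometry and scaling by $\lambda$ multiplies arclength by $\lambda$, after reparametrizing $\tilde x$ by its own arclength I expect $\tilde T = e^{i\theta} T$ and $\tilde N = e^{i\theta} N$, while the signed curvature, having dimension of inverse length, satisfies $\tilde k = k/\lambda$. Identifying $\mathbb R^2$ with $\mathbb C$, the Euclidean inner product is $\langle u,v\rangle = \mathrm{Re}(u\bar v)$, which is invariant under a common rotation of both entries. Plugging in then gives
\begin{equation*}
c\langle\tilde x,\tilde T\rangle - d\langle\tilde x,\tilde N\rangle = \lambda\bigl(c\langle x,T\rangle - d\langle x,N\rangle\bigr) = \frac{\lambda}{k} = \frac{1}{\tilde k},
\end{equation*}
which is the required identity and completes the argument.

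I do not anticipate any real obstacle here; the statement is essentially a bookkeeping check that rotations and homotheties are symmetries of the IMCF equation and commute with the prescribed self-similar motion $e^{(d+ci)t}$. If I wished to bypass Theorem 1.1 I could instead argue directly from the flow: IMCF is invariant under rotation (since both sides rotate equivariantly) and under scaling (since $H$ has dimension of inverse length, so $N/H$ scales linearly just like $dx/dt$), so applying $\lambda e^{i\theta}$ to any IMCF solution yields another IMCF solution, and applied to $e^{(d+ci)t} x(s)$ it produces $e^{(d+ci)t}\tilde x(s)$, a self-similar motion with the same speeds.
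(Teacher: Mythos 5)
Your proposal is correct and follows essentially the same route as the paper: both verify that $\langle x,T\rangle$ and $\langle x,N\rangle$ are rotation-invariant and scale by $\lambda$, while $k$ scales by $1/\lambda$, so the characterizing equation from Theorem 1.1 is preserved with the same $(c,d)$. The extra remark about arclength reparametrization and the alternative direct argument from flow invariance are fine but not needed beyond what the paper does.
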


For the solution move by translation, rotation and scaling, we can use the following theorem to reduce it to the case with no translating movements.
\begin{thm}
If the solution moves by rotation, scaling and translation with either the rotation part or the scaling part nonzero, we can choose a new origin so that the solution moves only by rotation and scaling in the new coordinates.
\end{thm}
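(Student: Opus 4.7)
Identify $\mathbb{R}^2\cong\mathbb{C}$ and write the candidate self-similar solution as
\[
x(s,t)=e^{(d+ci)t}x_0(s)+w(t),\qquad w(0)=0,
\]
where $w\colon\mathbb{R}\to\mathbb{C}$ records the (a priori unknown) translational part. The strategy is twofold: (i) use the IMCF equation to determine the shape of $w(t)$, and (ii) eliminate $w$ by a change of origin, choosing the new origin as the unique fixed point of the resulting pure rotation--scaling map.

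For step (i), since the time-$t$ curve is obtained from the initial one by rotating through angle $ct$ and scaling by $e^{dt}$, we have $N(s,t)=e^{ict}N_0(s)$ and $k(s,t)=e^{-dt}k_0(s)$. Inserting these together with $\partial_t x=(d+ci)e^{(d+ci)t}x_0+w'(t)$ into $\langle\partial_t x,N\rangle=1/k$ gives, after using the same identity at $t=0$ to eliminate the $x_0$-term,
\[
\operatorname{Re}\!\bigl(\bigl(w'(t)e^{-ict}-e^{dt}w'(0)\bigr)\,\overline{N_0(s)}\bigr)=0\qquad\text{for all }s,t.
\]
Since $x_0$ is not a straight line (lines are not IMCF self-similar solutions, as $k\equiv 0$), the Gauss map $s\mapsto N_0(s)$ sweeps out two linearly independent directions, so the bracketed complex factor must vanish identically. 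This yields the ODE $w'(t)=e^{(d+ci)t}w'(0)$, and integrating under the standing hypothesis $d+ci\neq 0$ gives
\[
w(t)=\bigl(1-e^{(d+ci)t}\bigr)\,p,\qquad p:=-\frac{w'(0)}{d+ci}.
\]

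For step (ii), setting $\tilde x(s,t):=x(s,t)-p$ and $\tilde x_0(s):=x_0(s)-p$, a one-line computation gives $\tilde x(s,t)=e^{(d+ci)t}\tilde x_0(s)$, so the motion is pure rotation--scaling in the shifted frame, as desired. The main obstacle I anticipate is justifying the ``separation of variables'' step in~(i): the inner-product identity only says that one complex number is orthogonal to $N_0(s)$ for every $s$, and one needs the geometric observation that the unit normal of a genuine plane curve is not confined to a single line through the origin in order to upgrade this to the vanishing of the factor itself. Once that is in hand, the remainder is a short algebraic manipulation, and the hypothesis $d+ci\neq 0$ is used precisely to invert $\lambda:=d+ci$ in the definition of $p$ — which is exactly why the pure-translator case is excluded from the statement.
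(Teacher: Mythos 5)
Your argument is correct, and it reaches the conclusion by a genuinely different route than the paper. The paper evaluates the flow identity at $t=0$, obtaining $-1/k = d\langle x,N\rangle - c\langle x,T\rangle + \langle\mathbf{T}'(0),N\rangle$, and absorbs the translation term algebraically by setting $\bar x = x + \mathbf{T}'(0)/(d-ci)$, so that the shifted initial curve satisfies the pure rotation--scaling soliton equation; that the motion itself is then a rotation--scaling about the new origin is left implicit (it requires the converse direction of Theorem 1.1 together with uniqueness of the flow). You instead determine the translational part $w(t)$ for all time via the ODE $w'(t)=e^{(d+ci)t}w'(0)$ and exhibit the motion explicitly as a rotation--scaling about the fixed point $p$, which proves the full time-dependent statement directly, with no appeal to uniqueness --- a gain in completeness at the cost of a slightly longer computation. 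The linear-independence point you flag (the Gauss map of a curve with $k\neq 0$ is not confined to a line, since lines are excluded from IMCF) is exactly the step the paper uses silently when it matches $\langle\mathbf{T}'(0),N(s)\rangle$ against $d\langle v,N\rangle-c\langle v,T\rangle$ for all $s$, so isolating and justifying it is an improvement rather than a detour. Two minor remarks: your new origin carries denominator $d+ci$ where the paper's carries $d-ci$; the discrepancy traces to the sign of the $\langle x,T\rangle$ term in the paper's expansion of $\langle i\mathbf{R}'\mathbf{S}e^{i\mathbf{R}}x,N\rangle$ and is immaterial to the statement. Also, both you and the paper take for granted that the rotation--scaling factor already has the form $e^{(d+ci)t}$ with constant $c$ and $d$, which in the presence of translation strictly requires rerunning the $t$-independence argument from the proof of Theorem 1.1.
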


Therefore, we can reduce to the case concerning only rotation and scaling. The following is the main theorem concerning the behavior of the self similar solutions.

\begin{thm}
For the self-similar solutions with the rotation speed and expanding speed $c,d$, if $(c,d)\neq(0,0)$, we can separate the behavior of the solution into 3 cases:
\begin{itemize}
\item Case 1: Undercritical case($c^2<4(1-d)$). The solution is unique up to rotation and scaling. It behaves like a cycloid which moves along a logarithmic spiral or a circle. There will be infinitely cusps on the solution.
\item Case 2: Critical case($c^2=4(1-d)$). Up to rotation and scaling, there are 2 different solutions. One of them is a logarithmic spiral, the other one has a cusp and is asymptotic to the logarithmic spiral near the origin.
\item Case 3: Overcritical case($c^2>4(1-d)$). Up to rotation and scaling, there are 4 different solutions. 2 of them are logarithmic spirals, the other 2 are asymptotic to the logarithmic spirals in both end of the solution. One of the latter 2 solutions has a cusp, the other is smooth.
\end{itemize}
Explicit parametrizations will be summarized at the end of this paper.
\end{thm}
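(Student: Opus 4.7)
The plan is to turn the self-similarity equation into a linear ODE by reparametrizing by the turning angle. Writing $T = e^{i\alpha}$ and $N = ie^{i\alpha}$, one has $ds/d\alpha = 1/k$; setting $p(\alpha) = \langle x, N\rangle$ and $q(\alpha) = \langle x, T\rangle$ gives
\begin{equation*}
\frac{dp}{d\alpha} = -q, \qquad \frac{dq}{d\alpha} = \frac{1}{k} + p.
\end{equation*}
Substituting $1/k = cq - dp$ from Theorem 1.1 eliminates $k$ and produces the linear system
\begin{equation*}
\frac{d}{d\alpha}\begin{pmatrix}p\\q\end{pmatrix} = \begin{pmatrix}0 & -1\\ 1-d & c\end{pmatrix}\begin{pmatrix}p\\q\end{pmatrix}.
\end{equation*}
Its characteristic equation $\lambda^2 - c\lambda + (1-d) = 0$ has discriminant $c^2 - 4(1-d)$, whose three signs match the three cases of the theorem exactly.

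Once $(p,q)(\alpha)$ is known, the curve is recovered as $x(\alpha) = (q + ip)e^{i\alpha}$. A real eigenvector solution $(p,q) = e^{\lambda\alpha}(1,-\lambda)$ yields $x = (-\lambda + i)e^{(\lambda + i)\alpha}$, a logarithmic spiral on which $cq - dp = -(c\lambda + d)e^{\lambda\alpha} \neq 0$, so it is smooth. Hence logarithmic spirals correspond to real eigenvectors: two in Case 3, one in Case 2, none in Case 1. To count solutions modulo rotation and scaling I use the symmetry dictionary: rotation by $\beta$ shifts $\alpha$ by $\beta$ and leaves $(p,q)$ fixed, positive scaling multiplies $(p,q)$ by that constant, and $x \mapsto -x$ (rotation by $\pi$) negates $(p,q)$.

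In Case 3, a general solution is $A_+ e^{\lambda_+\alpha}v_+ + A_- e^{\lambda_-\alpha}v_-$; since $\lambda_+ \neq \lambda_-$, the shift and scaling normalize $|A_\pm|$ simultaneously, leaving only the relative sign of $A_+A_-$. This gives two mixed solutions in addition to the two pure spirals, and the sign determines whether $cq - dp$ vanishes (cusp) or not (smooth). In Case 2 the Jordan block lets the shift absorb the eigenvector coefficient attached to the generalized-eigenvector solution, leaving a single non-spiral solution on which $cq - dp$ vanishes exactly once, producing the single cusp and the asymptotic spiral. In Case 1 the complex phase of the complex-eigenvalue solution can be absorbed into a shift, so there is essentially one solution; its radial envelope is the logarithmic spiral $e^{c\alpha/2}$ (a circle when $c = 0$), while the sinusoidal factor forces $cq - dp$ to oscillate through zero infinitely often, producing the claimed infinite train of cusps and the cycloid-on-spiral shape.

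The main obstacle I expect is the geometric translation of this ODE picture into the curve picture: verifying that $x(\alpha)$ is smooth except at zeros of $1/k = cq - dp$, that each such zero is a genuine cusp (since $dx/d\alpha = e^{i\alpha}/k$ vanishes there and the two branches meet tangentially with reversed orientation), and that in Cases 2 and 3 the mixed solutions are truly asymptotic to the pure spirals as $\alpha \to \pm\infty$, which I would verify by comparing $x(\alpha)$ to the dominant eigen-contribution using the explicit closed form. Once these qualitative facts are established, the explicit parametrizations claimed at the end of the paper follow by writing out the closed-form linear-ODE solutions in each case and inserting them into $x(\alpha) = (q + ip)e^{i\alpha}$.
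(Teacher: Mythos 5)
Your proposal is correct and takes essentially the same route as the paper: the paper's substitution $\bar{s}=\int ds/(c\tau-d\nu)=\int k\,ds$ is precisely your turning-angle reparametrization, yielding the same linear system for $(\langle x,N\rangle,\langle x,T\rangle)$ whose discriminant $c^2-4(1-d)$ drives the trichotomy, with logarithmic spirals as the eigendirection solutions and cusps at the zeros of $1/k=cq-dp$. The only (cosmetic) difference is that you solve the linear system by eigen-decomposition and normalize coefficients via the shift/scaling action, whereas the paper passes to polar coordinates in the $(\tau,\nu)$-plane and integrates $r'/r$ by quadrature before arriving at the same explicit two-spiral superpositions.
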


Most solution are either incomplete or has singularities. However, in some cases, there exists smooth complete solutions.

\begin{cor}
When $d\geq1$, $(c,d)\neq(0,1)$, the smooth solution other than the logarithmic spirals is the unique complete, non-compact solution without singularity in this case.
\end{cor}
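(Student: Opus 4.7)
The plan is to combine the classification in Theorem 1.4 with a completeness analysis. The first step is to observe that the hypothesis $d\geq 1$, $(c,d)\neq(0,1)$ forces us into the overcritical regime: we have $4(1-d)\le 0\le c^2$, and equality $c^2=4(1-d)$ would require $c=0$ and $d=1$, which is excluded. Hence $c^2>4(1-d)$, and Theorem 1.4 gives exactly four self-similar solutions up to rotation and scaling: two logarithmic spirals, one cusp solution asymptotic to the spirals at both ends, and one smooth solution asymptotic to the spirals at both ends.

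The cusp solution is disqualified immediately by the presence of a singularity. The substance of the proof is therefore (i) showing each logarithmic spiral is geodesically incomplete, and (ii) showing that the smooth asymptotic solution is complete and non-compact. For (i), any logarithmic spiral can be written in polar coordinates as $r=Ae^{B\theta}$ with $B\neq 0$, and its arc length element is
\begin{equation}
ds=A\sqrt{1+B^2}\,e^{B\theta}\,d\theta.
\end{equation}
Integrating as $\theta\to -\infty$ (if $B>0$, otherwise as $\theta\to+\infty$) yields a finite value, so the end of the spiral that spirals into the origin is reached in finite arc length; hence the spiral is incomplete as a curve in $\mathbb{R}^2\setminus\{0\}$, and passing through the origin would destroy smoothness of the immersion.

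For (ii), I would invoke the explicit parametrization summarized at the end of the paper to verify that in this parameter range the smooth asymptotic solution stays bounded away from the origin and escapes to infinity at both ends along its asymptotic logarithmic spirals. On the \emph{infinity} end of a log spiral the arc length integral $\int A\sqrt{1+B^2}\,e^{B\theta}\,d\theta$ diverges, so both ends of the smooth solution have infinite length and the solution is complete and manifestly non-compact. Uniqueness then follows because Theorem 1.4 has already enumerated every self-similar solution, and the three other candidates (two spirals and one cusp curve) have been ruled out.

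The main obstacle is the asymptotic behavior at step (ii): one must know that, in the overcritical case under the restriction $d\geq 1$, the two ends of the smooth solution genuinely go to infinity rather than into the origin along the spiral asymptote. I expect this to come out of the same phase-plane picture that drives the proof of Theorem 1.4, by checking that the relevant orbit in the $(\langle x,T\rangle,\langle x,N\rangle)$-plane tends to an unbounded spiral asymptote in both time directions when $d\geq 1$; the $d\geq 1$ hypothesis should be exactly what prevents the orbit from accumulating at the origin. Once this dichotomy is settled, the arc-length computation in (i) and its converse on the infinity end close the argument.
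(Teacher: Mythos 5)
Your overall structure matches the paper's: reduce to the overcritical case, discard the cusp curve, show the spirals are incomplete, and show the smooth curve $x_+$ is complete. But the step you yourself flag as ``the main obstacle'' --- verifying that both ends of the smooth solution actually escape the origin when $d\geq 1$ --- is left as an expectation rather than an argument, and it is precisely the content of the corollary. The paper closes this gap with one algebraic observation that your proposal never reaches: writing $\alpha=\tfrac{c}{2}+K$, $\beta=\tfrac{c}{2}-K$ with $K=\sqrt{\tfrac{c^2}{4}+d-1}$, one has $\alpha\beta=1-d\leq 0$, hence $\beta\leq 0\leq\alpha$ exactly when $d\geq 1$. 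Since
\begin{equation}
\Big|\frac{dx_+}{d\theta}\Big|=e^{\alpha\theta}(1+\alpha^2)+e^{\beta\theta}(1+\beta^2),
\end{equation}
the arc length integral diverges as $\theta\to+\infty$ (the $e^{\alpha\theta}$ term) and as $\theta\to-\infty$ (the $e^{\beta\theta}$ term if $\beta<0$, or the constant term $1+\beta^2$ if $\beta=0$), which is the completeness you need; when $0<\alpha\beta$, i.e.\ $d<1$, both exponents have the same sign and one end has finite length, so the restriction $d\geq1$ is sharp. Without this sign analysis your step (ii) is an assertion, not a proof.

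A second, smaller issue: your claim that every logarithmic spiral satisfies $r=Ae^{B\theta}$ with $B\neq0$ fails at the boundary $d=1$, $c\neq 0$, where $\beta=0$ and one of the two ``spirals'' degenerates to the circle $x=e^{i\theta}$. The circle is complete, so your incompleteness argument does not eliminate it; it must instead be excluded by the non-compactness requirement in the statement. Relatedly, in that same case the smooth solution $x_+$ does not escape to infinity at both ends --- one end spirals onto a circle of positive radius (still with infinite arc length, hence still complete) --- so the dichotomy you describe in (ii), ``bounded away from the origin and escapes to infinity at both ends,'' needs to be weakened accordingly.
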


\begin{rmk}
From the result of \cite{CL}, \cite{DFL}, \cite{DLW} and \cite{HI} for an $n$-dimensional homothetic soliton, the behavior is highly related to the expansion speed $d$. For $d=\frac{1}{n}$ we have compact solutions and for $d>\frac{1}{n}$ there exist complete, non-compact solutions. Here, we see that this also holds even if we consider of the effect of rotation.
\end{rmk}

The paper will be organized as follows: In section 2, we proof the theorem concerning the behavior the self-similar solutions in $\mathbb{R}^2$ and the equation concerning the curvature. In section 3, we set up an ODE describing the curves. We can get explicit equations for all the solutions. The behavior of the curves is determined by the sign of $c^2-4(1-d)$. In section 4, 5, 6, we study the critical, overcritical, undercritical cases, respectively. Section 7 is a summary of the behavior of the solutions.

Throughout this paper, we use the following convention: First, we neglect all the constants arise from indefinite integration. All of such constants correspond to rotation and scaling. Therefore, the final curve we derived in each case is unique up to a rotation and a scaling. Second, up to a rotation, $\theta$ will always stand for the direction of the unit tangent vector of the self-similar curve.

\section{Self-similar motions in $\mathbb{R}^2$}

Let $x:\mathbb{R}\to\mathbb{R}^2\cong\mathbb{C}$ be a curve parametrized by the arc length $s$. Any self-similar motion is a combination of rotation, scaling and translation. We can express a continuous self-similar motion as
\begin{equation}
x(s,t)=\mathbf{S}(t)e^{i\mathbf{R}(t)}x(s)+\mathbf{T}(t),
\end{equation}
where $\mathbf{S}:\mathbb{R}\to\mathbb{R}^+$, $\mathbf{R}:\mathbb{R}\to\mathbb{R}$, and $\mathbf{T}:\mathbb{R}\to\mathbb{R}^2\cong\mathbb{C}$ stand for the scaling, rotation and translation, respectively. For the initial condition, we have $\mathbf{S}(0)=1$, $\mathbf{R}(0)=0$, $\mathbf{T}(0)=0$. Let us consider the case without translation first.

\begin{proof}[Proof of theorem 1.1]
From the defining equation of inverse mean curvature flow, we have
\begin{equation}
\langle\partial_tx(s,t),N(s,t)\rangle=-\frac{1}{k}.
\end{equation}
Here, $N(s,t)$ is the unit normal vector obtained by rotating the tangent vector $T$ counter-clockwise by $\frac{\pi}{2}$. We have $N(s,t)=e^{i\mathbf{R}(t)}N(s)$. The signed curvature with respect to $N(s,t)$ is defined as $k(s,t)=\mathbf{S}(t)^{-1}\langle \frac{dT}{ds},N \rangle=-H$. Note that $k(s,t)=k(s)\mathbf{S}(t)^{-1}$ is the curvature after scaling.
\begin{equation}
\begin{split}
-\frac{\mathbf{S}(t)}{k(s)}&=-k(s,t)^{-1}=\langle \partial_tx(s,t), N(s,t)\rangle\\
&=\langle \mathbf{S}'(t)e^{i\mathbf{R}(t)}x(s)+i\mathbf{R}'(t)\mathbf{S}(t)e^{i\mathbf{R}(t)}x(s),N(s,t)\rangle\\
&=\mathbf{S}'(t)\langle x(s), N(s)\rangle-\mathbf{R}'(t)\mathbf{S}(t)\langle x(s),T(s)\rangle.
\end{split}
\end{equation}
Divide both side by $\mathbf{S}(t)$ yields
\begin{equation}
-\frac{1}{k(s)}=\frac{\mathbf{S}'(t)}{\mathbf{S}(t)}\langle x(s), N(s)\rangle-\mathbf{R}'(t)\langle x(s),T(s)\rangle.
\end{equation}
Note that the left hand side does not depend on $t$. Set $c=\mathbf{R}'(0)$ and $d=\frac{\mathbf{S}'(0)}{\mathbf{S}}$.
\begin{equation}
d\langle x(s), N(s)\rangle-c\langle x(s),T(s)\rangle=-\frac{1}{k(s)}=\frac{\mathbf{S}'(t)}{\mathbf{S}(t)}\langle x(s), N(s)\rangle-\mathbf{R}'(t)\langle x(s),T(s)\rangle.
\end{equation}

Solve $\mathbf{R}'(t)=c$, $\frac{\mathbf{S}'(t)}{\mathbf{S}(t)}=d$. We have $\mathbf{R}(t)=ct$, $\mathbf{S}(t)=e^{dt}$. The scaling and the rotation can be written as $\mathbf{S}(t)e^{i\mathbf{R}(t)}=e^{(d+ci)t}$.
\end{proof}
\begin{rmk}
If the functions $\langle x(s), N(s)\rangle$ and $\langle x(s), T(s)\rangle$ are dependent, it is possible that $c$, $d$ are not be unique. For example, the solution $x(s)=(e^s\cos s, e^s\sin s)$ can be think of as a solution which moves by rotation with $(c,d)=(2,0)$ or a solution which moves by scaling with $(c,d)=(0,5)$.
\end{rmk}

From the equation characterize the self similar solution, we immediate have the following.
\begin{proof}[Proof of proposition 1.2]
Let $x(s)$ be a curve which satisfies $d\langle x(s), N(s)\rangle-c\langle x(s),T(s)\rangle=-\frac{1}{k(s)}$. Fix $\mathbf{S}\in\mathbb{R}^+$, $\mathbf{R}\in\mathbb{R}$. Consider the curve $\bar{x}(s)=\mathbf{S}e^{i\mathbf{R}}x(s)$. The quantities $\langle x(s), N(s)\rangle$ and $\langle x(s), T(s)\rangle$ is unchanged after rotation and becomes $\mathbf{S}$ times larger after scaling. On the other hand, the curvature of $\bar{x}$ is $\frac{k(s)}{\mathbf{S}}$. Therefore, the curve satisfies $d\langle \bar{x}(s), N_{\bar{x}}(s)\rangle-c\langle \bar{x}(s),T(s)_{\bar{x}}\rangle=-\frac{1}{k_{\bar{x}}(s)}$ and it is a solution with the same rotation and expanding speed.
\end{proof}
Now, let us consider the self-similar motion with a translating term.
\begin{proof}[Proof of theorem 1.3]
From the same calculation as in the previous proof, we have
\begin{equation}
\begin{split}
-\frac{1}{k(s)}&=\frac{\mathbf{S}'(t)}{\mathbf{S}(t)}\langle x(s), N(s)\rangle-\mathbf{R}'(t)\langle x(s),T(s)\rangle+\langle\frac{\mathbf{T}'(t)}{\mathbf{S(t)}},N(s,t)\rangle.
\end{split}
\end{equation}
The left hand side is independent of $t$. Again, we can set $c=\mathbf{R}'(0)$ and $d=\frac{\mathbf{S}'(0)}{\mathbf{S}}$. From the assumption, $(c,d)\neq(0,0)$. Set $t=0$ and put $\bar{x}(s)=x(s)+\frac{\mathbf{T}'(0)}{d-ci}$. We have
\begin{equation}
-d\langle \bar{x}(s), N(s)\rangle+c\langle \bar{x}(s),T(s)\rangle=\frac{1}{k(s)}.
\end{equation}
Therefore, the solution $x$ move by combination of rotation and scaling with respect to the point $-\frac{\mathbf{T}'(0)}{d-ci}$ with rotation speed $c$ and expanding speed $d$.
\end{proof}

\begin{rmk}If the solution move by translation only, without loss of generality, we can rotate the solution such that the solution translates upward. We have $\mathbf{R}(t)=0$, $\mathbf{S}(t)=1$, $\mathbf{T}(t)=it$. In the work of Drugan, Lee and Wheeler \cite{DLW}, this solution is shown to be the cycloid
\begin{equation}
x(s')=\frac{1}{4}(s'-\sin(s'),1-\cos(s')).
\end{equation}
\end{rmk}

\section{Setting up and solving the ODE system for solutions}
From now on, we focus on the equation
\begin{equation}
-d\langle x(s), N(s)\rangle+c\langle x(s),T(s)\rangle=\frac{1}{k(s)}.
\end{equation}
Put $\tau=\langle x,T\rangle$, $\nu=\langle x,N\rangle$. Both $\tau$ and $\nu$ are functions of $s$. We have $c\tau-d\nu=\frac{1}{k}$ and obtain  an ODE system about $\tau$ and $\nu$.
\begin{equation}
\Bigg \{
\begin{array}{l}
\frac{d\tau}{ds}=1+k\nu=1+\frac{\nu}{c\tau-d\nu},\\
\frac{d\nu}{ds}=-k\tau=-\frac{\tau}{c\tau-d\nu}. \end{array}
\end{equation}
Note that whenever $c\tau-d\nu=0$, the curvature $k$ is infinity and we have a singularity of the curve. We are interested in the smooth part. Set a new variable
\begin{equation}
\bar{s}=\int\frac{1}{c\tau-d\nu}ds.
\end{equation}
By changing the variable, we have
\begin{equation}
\Bigg \{
\begin{array}{l}
\frac{d\tau}{d\bar{s}}=c\tau-d\nu+\nu=c\tau+(1-d)\nu,\\
\frac{d\nu}{d\bar{s}}=\tau. \end{array}
\end{equation}
This dynamic system has the same flow line as the original one. This is a linear ODE system, we can use this to study the flow line and then analyse the behavior of the singularity separately.

Since the system is linear, we can use polar coordinates for $\tau$ and $\nu$ to get simpler expressions. Set $\nu=r\cos\phi$, $\tau=r\sin\phi$. Note that we have $\frac{dr}{ds}=\frac{\tau}{r}=\sin\phi$. We want to find a relation between $r$ and $\phi$ on a flow line.

\begin{prop}
When $\phi$ is a constant $\phi_0$, it satisfies $\tan^2\phi+c\tan\phi+(1-d)=0$. In this case, up to a rotation, the solution is the logarithmic spiral $x=e^{(-\tan\phi_0+i)\theta}$.
\end{prop}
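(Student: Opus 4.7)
The plan is to derive the algebraic condition on $\tan\phi_0$ from the flow equations, and then integrate the resulting constraint in polar coordinates on $x$ itself to recover the curve. The condition $\phi\equiv\phi_0$ is equivalent (away from $\nu=0$) to the ratio $\tau/\nu=\tan\phi_0$ being constant along the curve, so the natural move is to differentiate this ratio along the flow and substitute the self-similar relation.

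Using the original Frenet system $\tau_s=1+k\nu$, $\nu_s=-k\tau$ together with $1/k=c\tau-d\nu$, I would compute
\[
(\tau/\nu)_s=\frac{\nu\tau_s-\tau\nu_s}{\nu^2}=\frac{\nu+k(\tau^2+\nu^2)}{\nu^2}=\frac{\nu+kr^2}{\nu^2},
\]
since $\tau^2+\nu^2=|x|^2=r^2$. Setting the numerator to zero gives $k=-\nu/r^2$; substituting $k(c\tau-d\nu)=1$ and clearing denominators yields
\[
\tau^2+c\tau\nu+(1-d)\nu^2=0,
\]
which after division by $\nu^2$ is the quadratic $\tan^2\phi_0+c\tan\phi_0+(1-d)=0$.

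To recover the curve explicitly, I would write $x=re^{i\psi}$, so $T=(r_s+ir\psi_s)e^{i\psi}$ and hence $\tau=\operatorname{Re}(x\bar T)=rr_s$, $\nu=\operatorname{Im}(x\bar T)=-r^2\psi_s$. Therefore $\tau/\nu=-d(\ln r)/d\psi$, and the constraint $\tau/\nu=\tan\phi_0$ is the first-order ODE $d(\ln r)/d\psi=-\tan\phi_0$, integrating to $r=Ce^{-\tan\phi_0\,\psi}$. Consequently $x=Ce^{(-\tan\phi_0+i)\psi}$, which after absorbing $C$ into the rotation-and-scaling freedom and relabeling the parameter is the claimed logarithmic spiral $x=e^{(-\tan\phi_0+i)\theta}$.

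The main bookkeeping subtlety is that the paper's convention is $\theta=$ tangent angle, while the integration above uses $\psi=\arg x$; for a logarithmic spiral these differ only by the constant $\arg(-\tan\phi_0+i)$, which is absorbed into the rotation freedom (cf.\ Proposition 1.2). As a final consistency check I would substitute $x=e^{(-\tan\phi_0+i)\theta}$ directly into $c\tau-d\nu=1/k$: computing $|x|$, $T$, $N$, and $k$ for this explicit curve and verifying the identity simultaneously confirms the quadratic and that the parameterization is correctly normalized.
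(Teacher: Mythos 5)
Your proposal is correct, and while it follows the same two-step skeleton as the paper (first derive the quadratic in $\tan\phi_0$, then integrate to get the spiral), the mechanics differ in both steps. For the quadratic, the paper works in the rescaled linear phase plane and imposes that the flow vector $(\tau,\,c\tau+(1-d)\nu)$ be parallel to the position vector $(\nu,\tau)$; you instead differentiate the ratio $\tau/\nu$ along the original arc-length system and set the numerator $\nu+k(\tau^2+\nu^2)$ to zero. These are the same invariance condition in two guises, and your substitution into $k(c\tau-d\nu)=1$ lands on the identical equation $\tau^2+c\tau\nu+(1-d)\nu^2=0$. For the reconstruction, the routes genuinely diverge: the paper computes $k=-\cos\phi_0/r$, integrates $\theta=\int k\,ds=-\cot\phi_0\log r$ using $dr/ds=\sin\phi$, and reassembles $x=\tau T+\nu N$; you put polar coordinates on $x$ itself, read off $\tau=rr_s$ and $\nu=-r^2\psi_s$, and integrate the first-order ODE $d(\ln r)/d\psi=-\tan\phi_0$. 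Your route is arguably cleaner since it never needs the curvature integral, at the cost of the bookkeeping between $\psi=\arg x$ and the paper's tangent angle $\theta$, which you correctly identify as a constant shift (note that such a shift multiplies the spiral by a unimodular-times-real constant, i.e.\ a rotation and a scaling; since the logarithmic spiral is invariant under a one-parameter family of combined rotations and scalings, this is still the same curve up to rotation, so the stated normalization survives). Two small points you should make explicit rather than parenthetical: the case $\nu\equiv 0$ (equivalently $\cos\phi_0=0$) must be excluded, which the paper does by observing that the quadratic relation forces $-\sin^2\phi_0=0$ there, a contradiction; and the final ``consistency check'' that the spiral actually solves $c\tau-d\nu=1/k$ should be carried out (or at least noted as the converse direction), since your derivation as written only establishes necessity.
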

\begin{proof}
For the solution with constant $\phi$, the flow vector and the position vector should be parallel. Therefore,
\begin{equation}
(r\cos\phi,r\sin\phi)\parallel \big( -\sin\phi,c\sin\phi+(1-d)\cos\phi \big).
\end{equation}
Which is equivalent to 
\begin{equation}
-\sin^2\phi=c\sin\phi\cos\phi+(1-d)\cos^2\phi.
\end{equation}
Suppose $\cos\phi=0$, the right side vanishes while the left side equals -1. It is a contradiction. Therefore $\cos\phi\neq0$, we can divide by $\cos^2\phi$ and get the equation
\begin{equation}
\tan^2\phi+c\tan\phi+(1-d)=0.
\end{equation}
Let $\phi_0$ be the solution of the equation. We have $(\tau,\nu)=(r\sin\phi_0, r\cos\phi_0)$. Now, we want to solve for the original curve $x$. Let $\theta$ be the angle of the tangent vector $T$. We have $T=e^{i\theta}$, $N=i e^{i\theta}$. Use $k=\frac{1}{c\tau-d\nu}=-\frac{\cos\phi_0}{r}$,
\begin{equation}
\theta=\int kds=\int-\frac{1}{r}\cot\phi_0dr=-\cot\phi_0\log r.
\end{equation}
From $x=\tau T+\nu N$, we can construct the solution.
\begin{equation}
\begin{split}
x&=r(\sin\phi_0 +i\cos\phi_0)e^{i\theta}
=(\sin\phi_0 +i\cos\phi_0)e^{(-\tan\phi_0+i)\theta}\\
\end{split}
\end{equation}
which is a logarithmic spiral. Up to a rotation, we can write it as $x=e^{(-\tan\phi_0+i)\theta}$.
\end{proof}

The line $\phi=\phi_0$ is a flow line. From the uniqueness of flow line, any other flow line will never touches the line and we have $\frac{d\phi}{d\bar{s}}\neq0$. In this case, $r$ is always a function of $\phi$. For the point with $\frac{d\nu}{d\bar{s}}\neq0$, we have
\begin{equation}
\frac{d\tau}{d\nu}=
\frac{r'\sin\phi+r\cos\phi}{r'\cos\phi-r\sin\phi}=-c-(1-d)\cot\phi,
\end{equation}
where $r'$ means derivative with respect to $\phi$. Therefore,
\begin{equation}
\frac{r'}{r}=\frac{-d\cos\phi\sin\phi+c\sin^2\phi}{1+c\cos\phi\sin\phi-d\cos^2\phi}=\frac{c\tan^2\phi-d\tan\phi}{\tan^2\phi+c\tan\phi+(1-d)}.
\end{equation}
If $\frac{d\nu}{d\bar{s}}=\tau=0$, this can only happen on the $\nu$-axis. If $d=0$, the $\nu$-axis is a flow line with constant $\phi$. Otherwise, the flow vector is perpendicular to the position. In this case, we also have $r'=0=\frac{c\tan^2\phi-d\tan\phi}{\tan^2\phi+c\tan\phi+(1-d)}$.

Now, We can integrate both side by substituting $t=\tan\phi$,
\begin{equation}
\begin{split}
\log r&=\int\frac{c\tan^2\phi-d\tan\phi}{\tan^2\phi+c\tan\phi+(1-d)}d\phi=\int\frac{ct^2-dt}{\big(t^2+ct+(1-d)\big)(t^2+1)}dt\\
&=\frac{1}{2}\log\Big|\frac{t^2+1}{t^2+ct+(1-d)}\Big|+\frac{c}{2}\int\frac{1}{t^2+ct+(1-d)}dt.
\end{split}
\end{equation}
The second term has a geometric interpretation. Using $k=\frac{1}{c\tau-d\nu}$, $\frac{d\phi}{ds}=\frac{1}{r}(\cos\phi+\frac{1}{c\sin\phi-d\cos\phi})$, the direction $\theta$ of the tangent vector $T$ is given by
\begin{equation}
\begin{split}
\theta&=\int kds=
\int\frac{\sec^2\phi}{\tan^2\phi+c\tan\phi+(1-d)}d\phi=\int\frac{dt}{t^2+ct+(1-d)}.
\end{split}
\end{equation}
To emphasise the dependence to the constant $c$, $d$. Define
\begin{equation}
\theta_{c,d}(t)=\int\frac{1}{t^2+ct+(1-d)}dt.
\end{equation}
The form of this integration will depend on the sign of $c^2-4(1-d)$. We will postpone this integration to each of the following sections. Here, we derive only the expression of the solution in terms of $r(\phi)$, $\theta_{c,d}(\phi)$ and $\phi$.
\begin{equation}
r=\frac{|\sec\phi|}{\sqrt{|(\tan^2\phi+\frac{c}{2})^2+(1-d-\frac{c^2}{4})|}}\exp\big(\frac{c}{2}\theta_{c,d}(\tan\phi)\big).
\end{equation}

Again, from $x=\tau T+\nu N$, we can construct the solution.
\begin{equation}
\begin{split}
x(\phi)&=r(\sin\phi +i\cos\phi)e^{i\theta_{c,d}(\phi)}\\
&=\frac{1}{\sqrt{|(\tan^2\phi+\frac{c}{2})^2+(1-d-\frac{c^2}{4})|}}e^{(\frac{c}{2}+i)\theta_{c,d}(\phi)}(\tan\phi+i).
\end{split}
\end{equation}
Now, we can compute the explicit solution in each case.

\section{Critical case: $c^2=4(1-d)$}
In this case, the special solution is obtained by solving
\begin{equation}
\tan^2\phi+c\tan\phi+(1-d)=0.
\end{equation}
There is only one solution $\tan\phi_0=-\frac{c}{2}$, up to a rotation, we can write the solution as
\begin{equation}
\begin{split}
x&=e^{(\frac{c}{2}+i)\theta}.\\
\end{split}
\end{equation}

For the other solution,
\begin{equation}
\theta_{c,d}(t)=\int\frac{1}{(t+\frac{c}{2})^2}dt=-\frac{1}{t+\frac{c}{2}}.
\end{equation}
Therefore, $\tan\phi+\frac{c}{2}=-\frac{1}{\theta}$. The curve is given by
\begin{equation}
\begin{split}
x&=\theta e^{(\frac{c}{2}+i)\theta}(\tan\phi+i)
=(-\frac{c}{2}+i)\theta e^{(\frac{c}{2}+i)\theta}-e^{(\frac{c}{2}+i)\theta}.
\end{split}
\end{equation}
The curve can be written as adding a logarithmic spiral $r=e^{\frac{c}{2}\theta}$ with $r=\theta e^{\frac{c}{2}\theta}$ spiral. If we differentiate this curve with respect to $\theta$,
\begin{equation}
\begin{split}
\frac{dx}{d\theta}&=(-\frac{c}{2}+i)e^{(\frac{c}{2}+i)\theta}+(-\frac{c}{2}+i)(\frac{c}{2}+i)\theta e^{(\frac{c}{2}+i)\theta}-(\frac{c}{2}+i)e^{(\frac{c}{2}+i)\theta}\\
&=-ce^{(\frac{c}{2}+i)\theta}-(1+\frac{c^2}{4})\theta e^{(\frac{c}{2}+i)\theta}=-\Big(c+(1+\frac{c^2}{4})\theta\Big)e^{(\frac{c}{2}+i)\theta}.\\
\end{split}
\end{equation}
The curve will be singular when $\theta=-\frac{4c}{4+c^2}$.

\subsection{Example: Numerical solution of the rotation case: $(c,d)=(2,0)$}

\begin{wrapfigure}{r}{0.23\textwidth}
\includegraphics[width=0.23\textwidth]{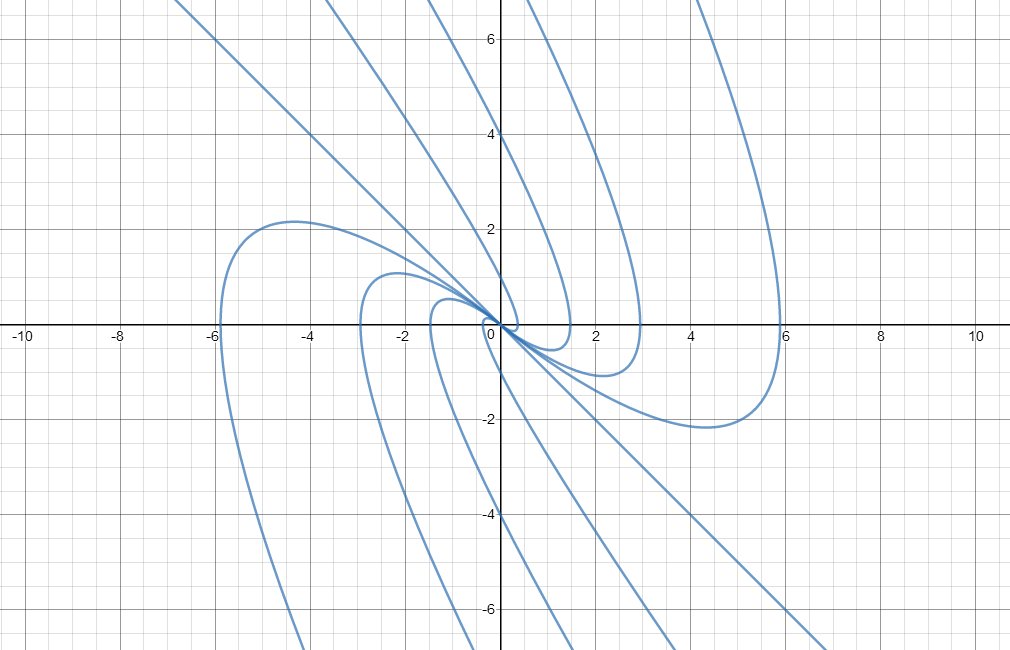}
\end{wrapfigure}
The relation of $\tau-\nu$ is on the right. Up to a rotation, the special solution is given by $\tau=-\nu$. $(\tau,\nu)=(\frac{1}{\sqrt{2}},-\frac{1}{\sqrt{2}})r$ corresponds to the logarithmic spiral $e^{(1+i)\theta}$. Recall that this is also the expanding solution listed in Drugan, Lee and Wheeler's work \cite{DLW}. For the other solution, up to a scaling and a rotation, the curve is as follows.

\begin{figure}[ht]
\centering
\includegraphics[width=150pt]{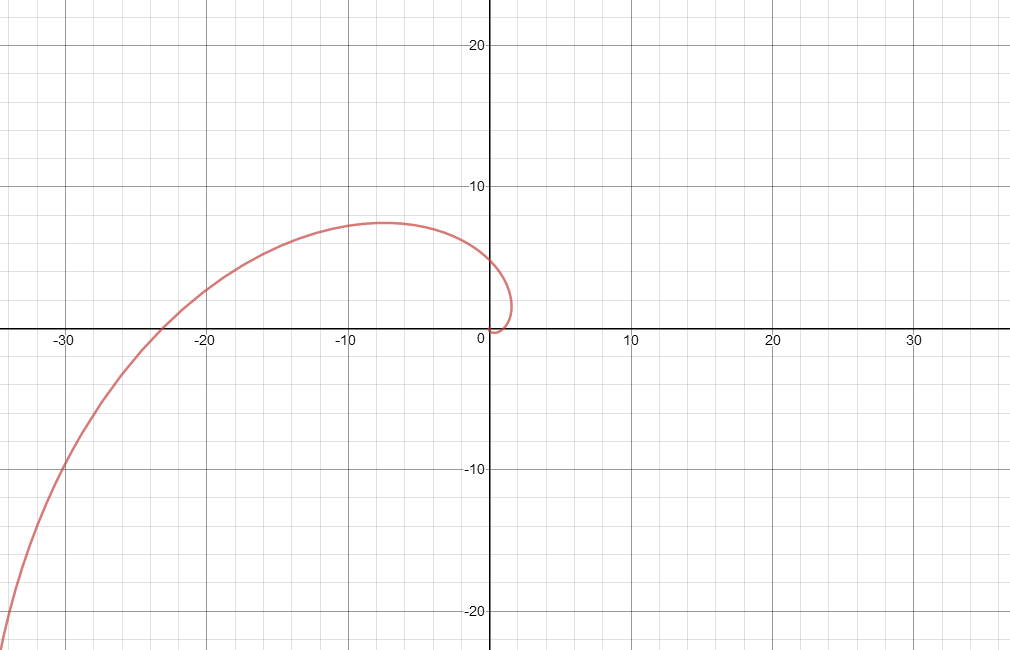}
\includegraphics[width=150pt]{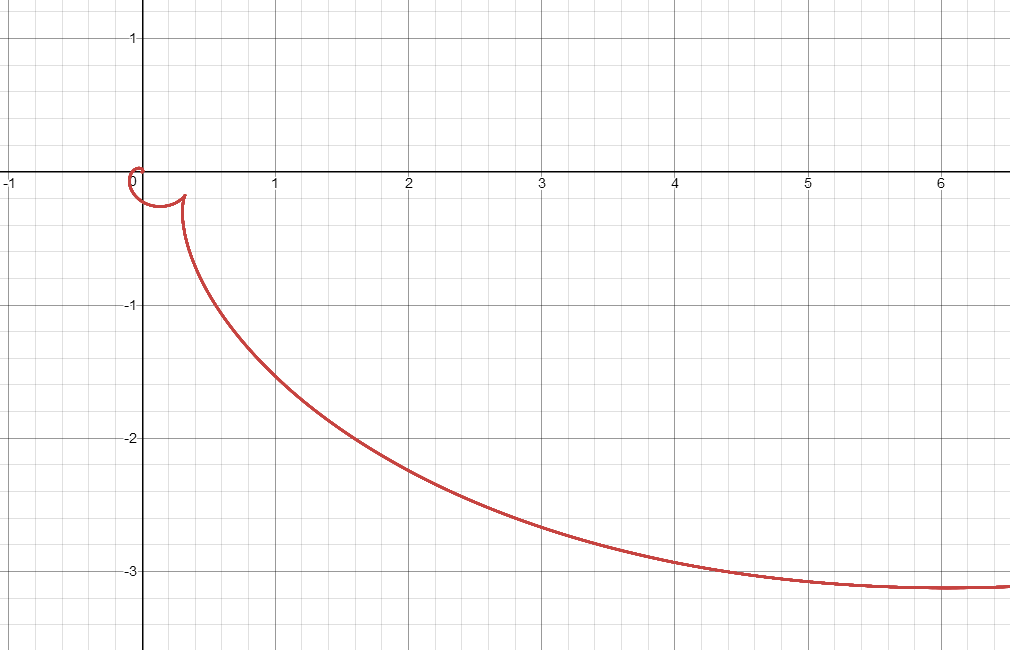}
\caption{The logarithmic spiral and the other solution}
\end{figure}

For the solution other than the logarithmic spiral, there is a singularity of the curve and it separates the curve into 2 parts. One part spiral out to infinity. The other spiral into the origin. Both the inner part and the outer part is asymptotic to the spiral $x=\theta e^{(1+i)\theta}$.

\section{Overcritical case: $c^2>4(1-d)$}

Let $K=\sqrt{\frac{c^2}{4}+d-1}$, $\alpha=\frac{c}{2}+K$, $\beta=\frac{c}{2}-K$.
The solution of
\begin{equation}
\tan^2\phi+c\tan\phi+(1-d)=0
\end{equation}
are given by $\tan\phi=-\alpha,-\beta$. Up to a rotation, this corresponds to the logarithmic spirals 
\begin{equation}
\begin{split}
x&=e^{(\alpha+i)\theta},e^{(\beta+i)\theta}.
\end{split}
\end{equation}

For the general solution, after integration, $\theta_{c,d}$ can be represent by either tanh or coth. In the first case,
\begin{equation}
\theta=\theta_{c,d}(\tan\phi)=\frac{-1}{K}\tanh^{-1}(\frac{1}{K}(\tan\phi+\frac{c}{2})).
\end{equation}
Therefore, $\tan\phi+\frac{c}{2}=-K\tanh(K\theta)$. We have
\begin{equation}
\begin{split}
x=&\frac{1}{\sqrt{|K^2\tanh^2(K\theta)-K^2|}}e^{(\frac{c}{2}+i)\theta}(-(\frac{c}{2}+K\tanh(K\theta))+i)\\
=&\frac{e^{(\frac{c}{2}+i)\theta}}{K}\big(\cosh(K\theta)(i-\frac{c}{2})-K\sinh(K\theta)\big)\\
=&\frac{e^{(\frac{c}{2}+i)\theta}}{2K}\big((e^{K\theta}+e^{-K\theta})(i-\frac{c}{2})-K(e^{K\theta}-e^{-K\theta})\big)\\
=&\frac{1}{2K}\big(e^{(\alpha+i)\theta}(i-\alpha)+e^{(\beta+i)\theta}(i-\beta)\big).
\end{split}
\end{equation}
For simplicity, up to a scaling, the curve can be expressed as
\begin{equation}
\begin{split}
x=&e^{(\alpha+i)\theta}(i-\alpha)+e^{(\beta+i)\theta}(i-\beta).
\end{split}
\end{equation}
This curve can be regarded as adding a logarithmic spiral with another logarithmic spiral.

In the second case,
\begin{equation}
\theta=\theta_{c,d}(\tan\phi)=\frac{-1}{K}\coth^{-1}(\frac{1}{K}(\tan\phi+\frac{c}{2})).
\end{equation}
Therefore, $\tan\phi+\frac{c}{2}=-K\coth(K\theta)$. We have
\begin{equation}
\begin{split}
x=&\frac{1}{\sqrt{|K^2\coth^2(K\theta)-K^2|}}e^{(\frac{c}{2}+i)\theta}(i-(\frac{c}{2}+K\coth(K\theta)))\\
=&\frac{e^{(\frac{c}{2}+i)\theta}}{K}\big(\sinh(K\theta)(i-\frac{c}{2})-K\cosh(K\theta)\big)\\
=&\frac{e^{(\frac{c}{2}+i)\theta}}{2K}\big((e^{K\theta}-e^{-K\theta})(i-\frac{c}{2})-K(e^{K\theta}+e^{-K\theta})\big)\\
=&\frac{1}{2K}\big(e^{(\alpha+i)\theta}(i-\alpha)-e^{(\beta+i)\theta}(i-\beta)\big).
\end{split}
\end{equation}
For simplicity, up to a scaling, the curve can be expressed as
\begin{equation}
\begin{split}
x=&e^{(\alpha+i)\theta}(i-\alpha)-e^{(\beta+i)\theta}(i-\beta).
\end{split}
\end{equation}
This curve can be regarded as adding a logarithmic spiral with another logarithmic spiral with a different angle of rotation.

We can write the solution as
\begin{equation}
\begin{split}
x_\pm=&e^{(\alpha+i)\theta}(i-\alpha)\pm e^{(\beta+i)\theta}(i-\beta).
\end{split}
\end{equation}
If we differentiate with respect to $\theta$,
\begin{equation}
\begin{split}
\frac{d}{d\theta}x_\pm=
&=-\big(e^{\alpha\theta}(1+\alpha^2)\pm e^{\beta\theta}(1+\beta^2)\big)e^{i\theta}.
\end{split}
\end{equation}
Since $e^{\alpha\theta}(1+\alpha^2)+e^{\beta\theta}(1+\beta^2)$ never vanishes, $x_+$ is a smooth curve. On the other hand, when $\theta=\frac{1}{\alpha-\beta}\log(\frac{1+\beta^2}{1+\alpha^2})$, $x_-$ is singular and has a cusp there.

\subsection{Example: Numerical solution of the rotation case: $(c,d)=(3,0)$}

\begin{wrapfigure}{r}{0.23\textwidth}
\centering
\includegraphics[width=0.23\textwidth]{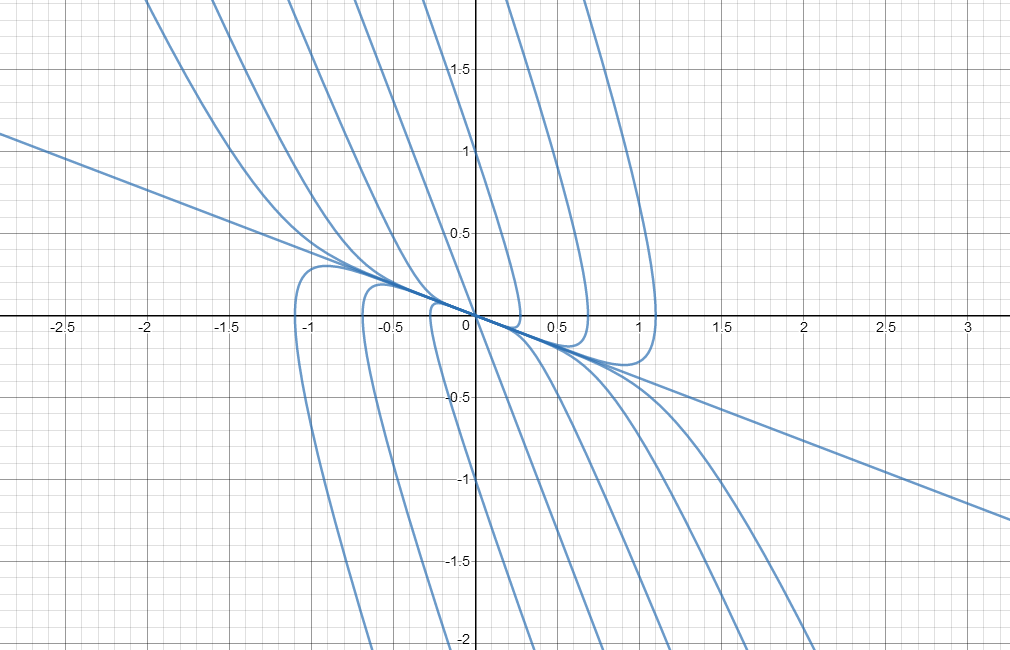}
\end{wrapfigure}

In the $\nu-\tau$ space shown in the right, the 2 straight lines corresponds the 2 logarithmic spiral solutions. These lines cut the plane into different zones which correspond to $-K\tanh(K\theta)=\tan\phi+\frac{c}{2}$ and $-K\coth(K\theta)=\tan\phi+\frac{c}{2}$.

The logarithmic special solutions are given as $x=\exp((\frac{3+\sqrt5}{2}+i)\theta)$, $x=\exp((\frac{3-\sqrt5}{2}+i)\theta)$.

\begin{figure}[ht]
\centering
\includegraphics[width=150pt]{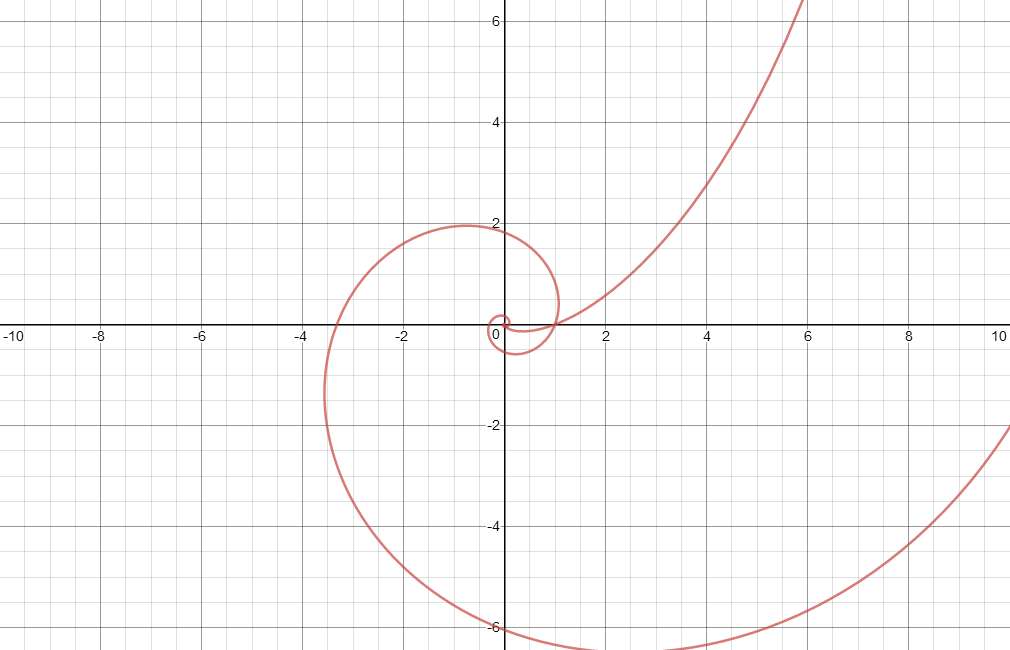}
\includegraphics[width=150pt]{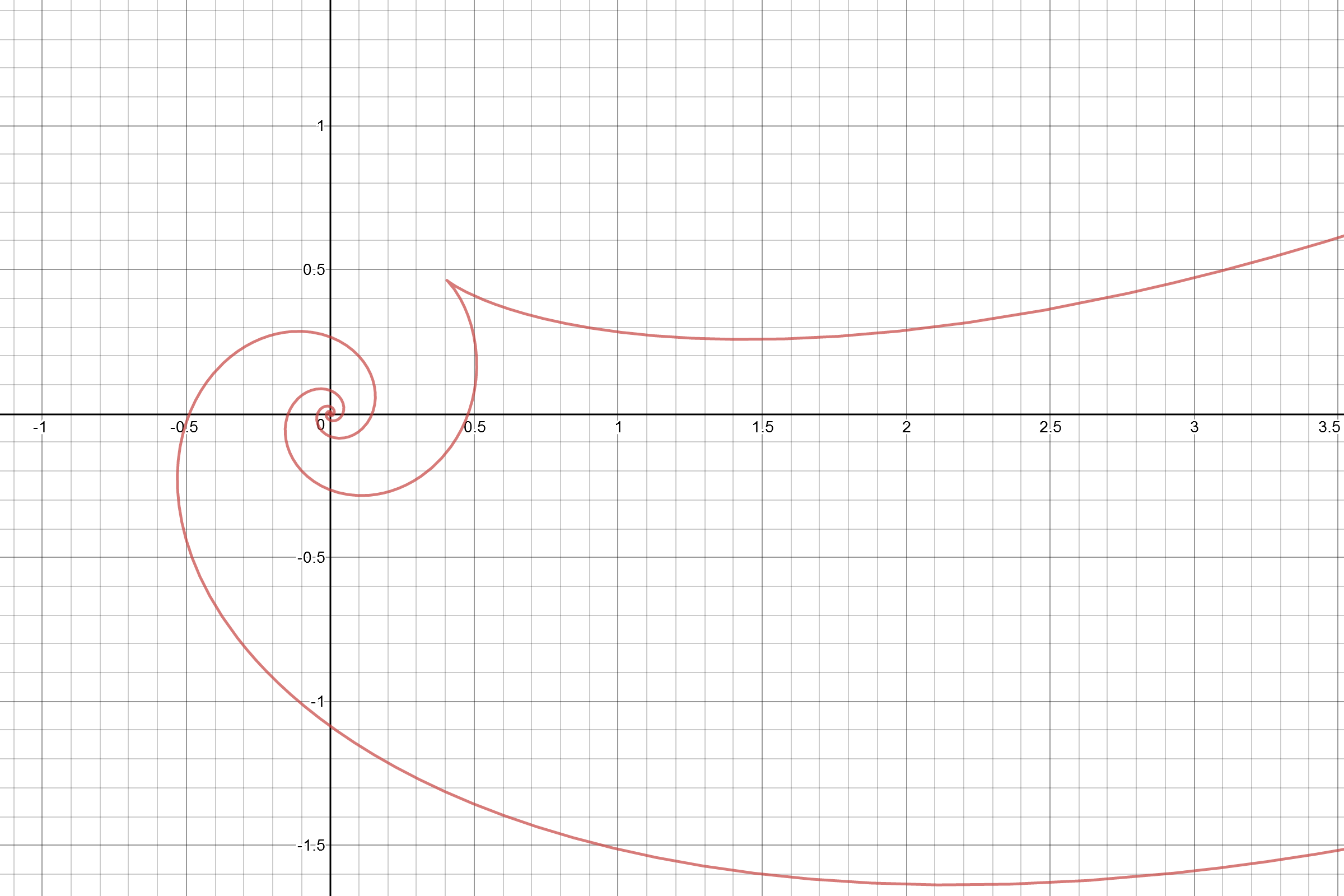}
\caption{The 2 logarithmic spirals and the other 2 solutions}
\end{figure}

For the solutions other than the logarithmic ones, the curve with a cusp corresponds to the $\coth$ function. The curve without a cusp corresponds to the $\tanh$ function. Both curve is asymptotic to the logarithmic spiral $r=\exp(\frac{3-\sqrt{5}}{2}\theta)$ near origin. Both curve is asymptotic to the logarithmic spiral $r=\exp(\frac{3+\sqrt{5}}{2}\theta)$ near infinity.

\subsection{Complete solution when $d\geq1$}

In this case, we have $\alpha\beta=1-d\leq0$. This means $\beta\leq0\leq\alpha$.

$\alpha=\beta=0$ actually belongs to the critical case. In this case, the special solution becomes a circle. Which is compact and embedded.

There are 2 cases left, either one of $\alpha$, $\beta$ vanishes or neither. By symmetry, if one of them vanishes, assume $\beta=0$, the solution becomes
\begin{equation}
x=\frac{1}{c}\big(e^{(c+i)\theta}(i-c)+e^{i\theta}i\big).
\end{equation}
This is a complete solution. On end goes to $r=e^{c\theta}$ logarithmic spiral. The other end converges to a circle. There is no self-intersection of this solution. However, since it approaches a circle when going inward, it is not embedded.

In the $\beta<0<\alpha$ case. Both ends go to infinity and will be asymptotic to logarithmic spirals which rotate in opposite directions. There will be infinitely many self-intersections.

\begin{figure}[ht]
\centering
\includegraphics[width=150pt]{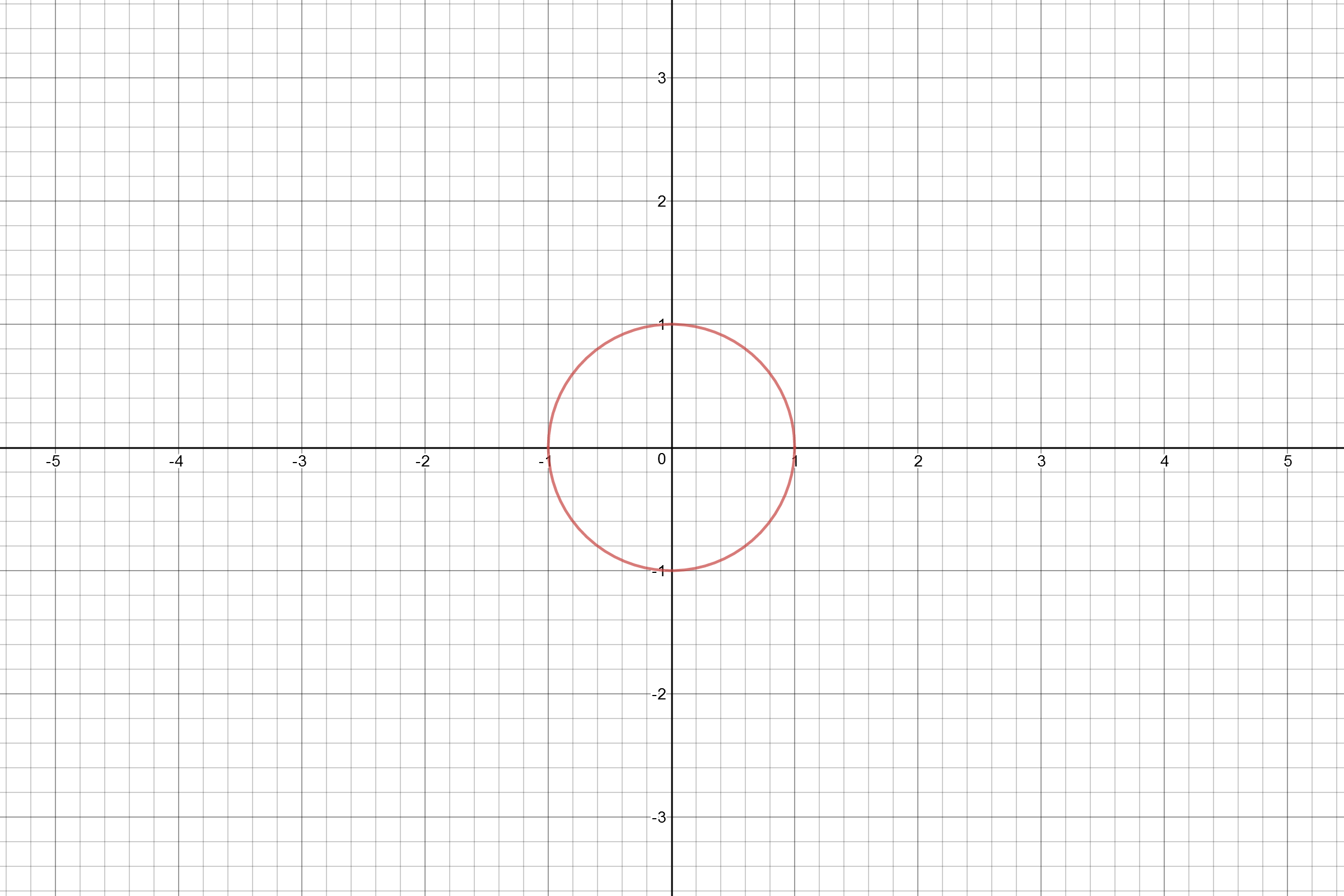}
\includegraphics[width=150pt]{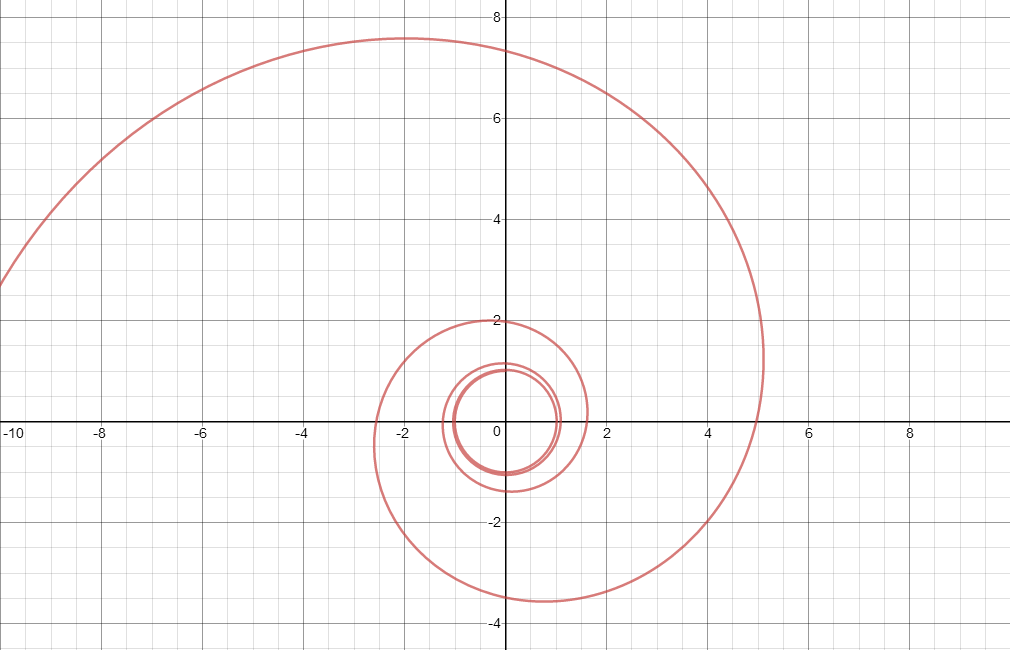}
\includegraphics[width=150pt]{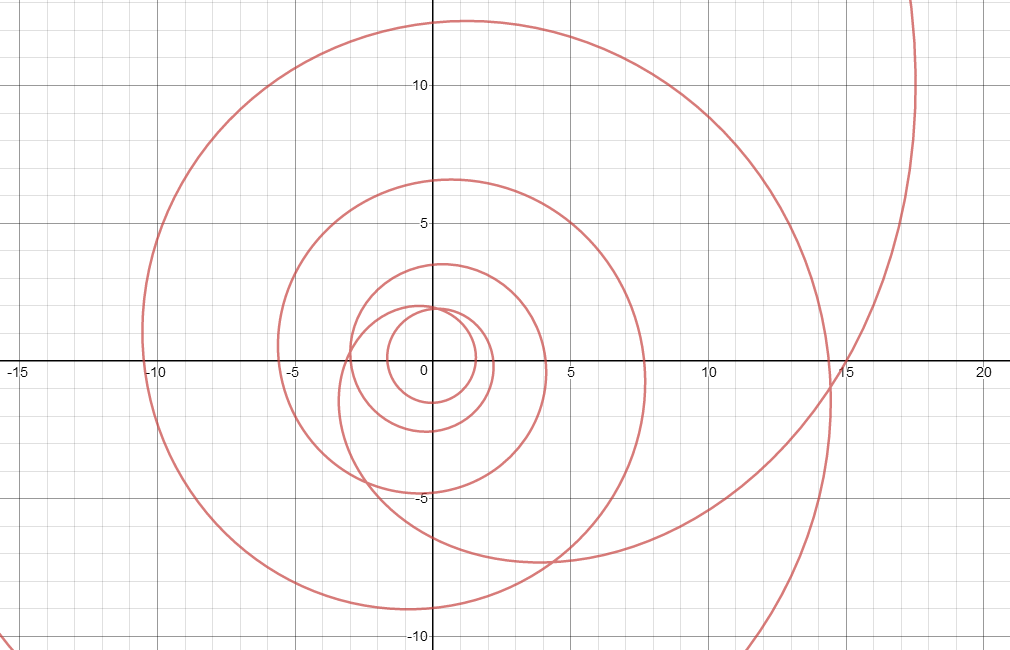}
\caption{Complete solutions}
\end{figure}
\bigskip
\bigskip
\bigskip
\bigskip

\section{Undercritical case: $c^2<4(1-d)$}
Let $K=\sqrt{1-d-\frac{c^2}{4}}$, we have
\begin{equation}
\theta=\frac{1}{K}\tan^{-1}(\frac{1}{K}(\tan\phi+\frac{c}{2})).
\end{equation}
Therefore, $\tan\phi+\frac{c}{2}=K\tan(K\theta)$. We have
\begin{equation}
\begin{split}
x&=\frac{1}{\sqrt{K^2\tan^2(K\theta)+K^2}}e^{(\frac{c}{2}+i)\theta}\big(i-(\frac{c}{2}-K\tan(K\theta))\big)\\
&=\frac{e^{(\frac{c}{2}+i)\theta}}{K}\big(\cos(K\theta)(i-\frac{c}{2})+K\sin(K\theta)\big)\\
&=\frac{e^{(\frac{c}{2}+i)\theta}}{2K}\big((e^{Ki\theta}+e^{-Ki\theta})(i-\frac{c}{2})-Ki(e^{Ki\theta}-e^{-Ki\theta})\big)\\
&=\frac{1}{2K}\big(e^{(\frac{c}{2}+(K+1)i)\theta}((1-K)i-\frac{c}{2})+e^{(\frac{c}{2}+(1-K)i)\theta}((1+K)i-\frac{c}{2})\big).
\end{split}
\end{equation}
We can scale the curve and express it as
\begin{equation}
\begin{split}
x=e^{(\frac{c}{2}+(K+1)i)\theta}((1-K)i-\frac{c}{2})+e^{(\frac{c}{2}+(1-K)i)\theta}((1+K)i-\frac{c}{2}).
\end{split}
\end{equation}

Note that $K>0$. The $e^{(\frac{c}{2}+(K+1)i)\theta}((1-K)i-\frac{c}{2})$ part will always have smaller magnitude and higher rotation speed. The $e^{(\frac{c}{2}+(1-K)i)\theta}((1+K)i-\frac{c}{2})$ part will always have larger magnitude and smaller angular speed. The shape will be closed to the spiral $e^{(\frac{c}{2}+(1-K)i)\theta}$. We can classify the shape of the curve as follows.

\begin{enumerate}
\item $K=1$, no angular movement:
In this case, 
\begin{equation}
\begin{split}
x&=e^{(\frac{c}{2}+2i)\theta}(-\frac{c}{2})+e^{\frac{c}{2}\theta}(2i-\frac{c}{2})
\end{split}
\end{equation}

\item $K<1$, bending outward.

\item $K>1$, bending inward.
\end{enumerate}

\begin{rmk}
The translating solution can be regarded as the limit case $(c,d)=(0,0)$, or equivalently $(c,k)=(0,1)$. The solution is the standard cycloid.
\end{rmk}

\begin{rmk}
All of the solution with $K\geq 1$ are shrinking solutions, that is, $d<0$.
\end{rmk}

\begin{figure}[ht]
\centering
\includegraphics[width=150pt]{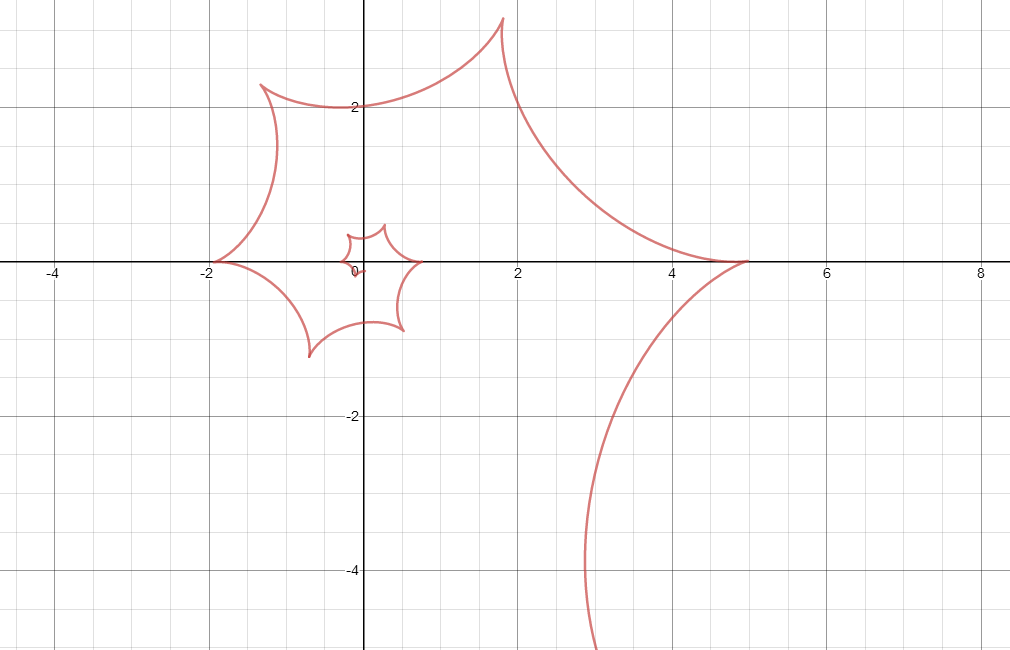}
\includegraphics[width=150pt]{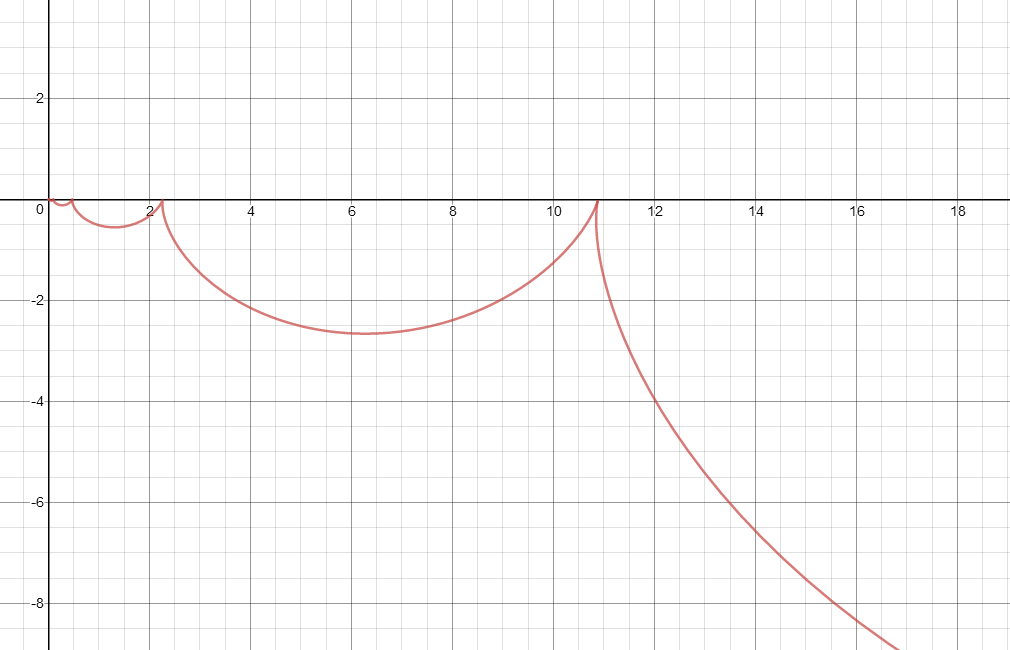}
\includegraphics[width=150pt]{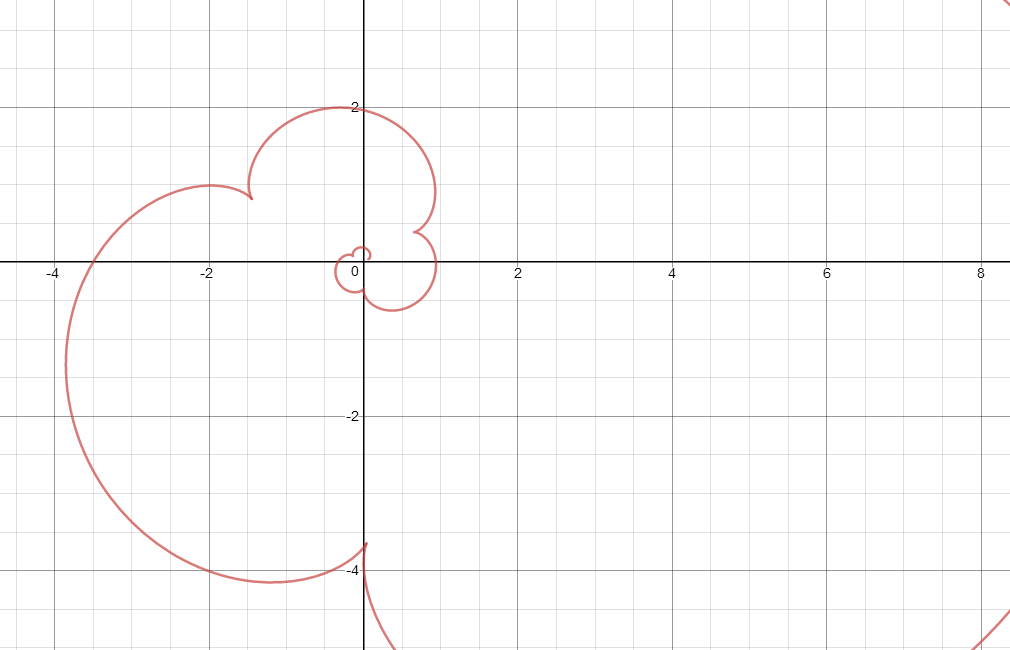}
\caption{The solutions where the cycloid bend inward, does not bend, bend outward}
\end{figure}

\subsection{Example: Numerical solution of the rotation case: $(c,d)=(1,0)$}

\begin{wrapfigure}{r}{0.23\textwidth}
\centering
\includegraphics[width=0.23\textwidth]{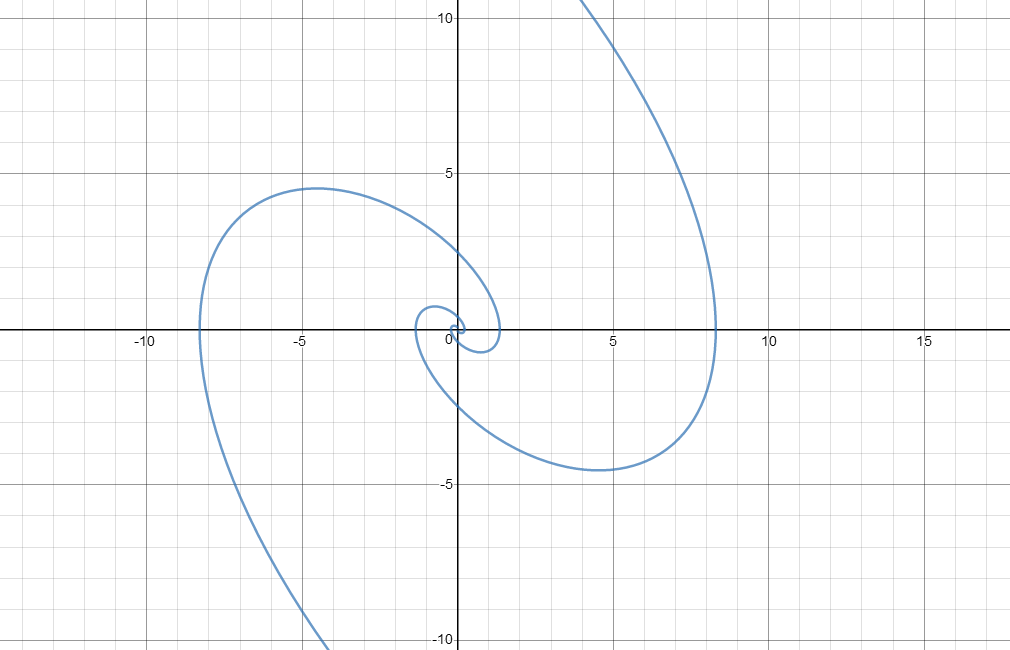}
\end{wrapfigure}

In this case, the relation of $\nu-\tau$ is given on the right. In this case, there are no straight lines which correspond to logarithmic spirals. Up to rotation and scaling, there are only 1 solution. On the curve, there are infinitely many cusps. The curve behaves like a cycloid scaled and rotated with respect to a logarithmic spiral.

\begin{figure}[ht]
\centering
\includegraphics[width=150pt]{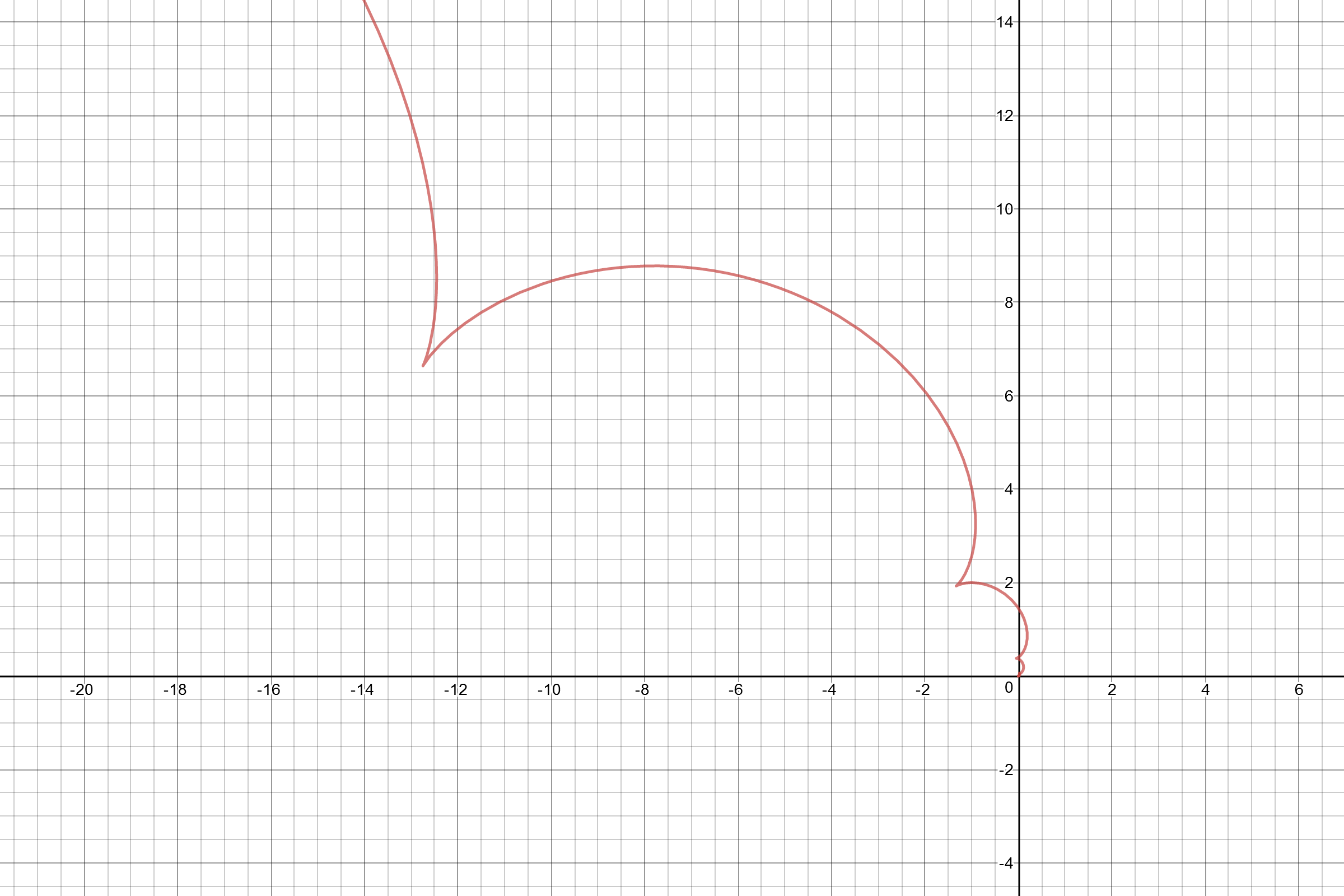}
\caption{the general solution in undercritical case}
\end{figure}

\section{Summary}

\begin{wrapfigure}{r}{0.23\textwidth}
\centering
\includegraphics[width=0.23\textwidth]{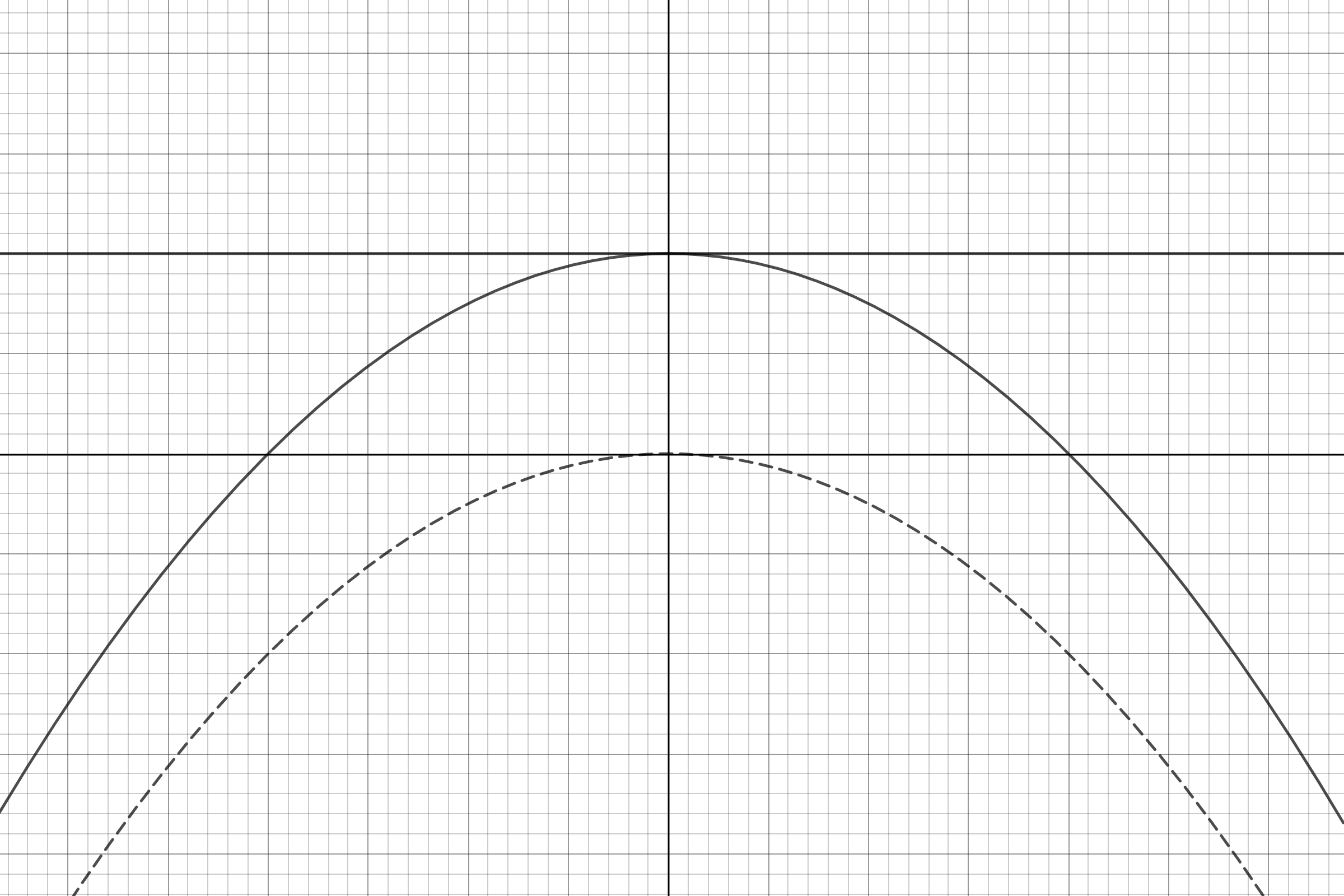}
\end{wrapfigure}
We can separate the $c-d$ space into different zones according to the behavior of the solutions. When $c^2<4(1-d)$, we have the undercritical case. When $c^2=4(1-d)$, we have the critical case. When $c^2>4(1-d)$, we have the overcritical case. Forthmore, for the case $d\geq1$, there is a unique complete solution.

\subsection{Undercritical case}
When $c^2<4(1-d)$, let $K=\sqrt{1-d-\frac{c^2}{4}}$, the solution is of the form
\begin{equation}
x=e^{(\frac{c}{2}+(K+1)i)\theta}((1-K)i-\frac{c}{2})+e^{(\frac{c}{2}+(1-K)i)\theta}((1+K)i-\frac{c}{2}).
\end{equation}

The solution behaves as a cycloid on a logarithmic spiral. We can further separate into 3 cases:
\begin{itemize}
\item $c^2<-4d$ The cycloid is inside the spiral.
\item $c^2=-4d$ The cycloid lies on a ray.
\item $-4d<c^2<4(1-d)$ The cycloid is outside the spiral.
\end{itemize}
\subsection{Critical case}
When $c^2=4(1-d)$, the solution is either the logarithmic spiral
\begin{equation}
x=e^{(\frac{c}{2}+i)\theta}
\end{equation}
or the curve
\begin{equation}
\begin{split}
x=(-\frac{c}{2}+i)\theta e^{(\frac{c}{2}+i)\theta}-e^{(\frac{c}{2}+i)\theta},
\end{split}
\end{equation}
which is a composition of $r=e^{\frac{c}{2}\theta}$ and $r=\theta e^{\frac{c}{2}\theta}$. 
\subsection{Overcritical case} When $c^2>4(1-d)$, let $K=\sqrt{\frac{c^2}{4}+d-1}$, the solution is either the logarithmic spirals
\begin{equation}
\begin{split}
x&=e^{(\frac{c}{2}+K+i)\theta},\\
x&=e^{(\frac{c}{2}-K+i)\theta},
\end{split}
\end{equation}
or the curves
\begin{equation}
\begin{split}
x=&e^{(\frac{c}{2}+K+i)\theta}(i-\frac{c}{2}-K)+e^{(\frac{c}{2}-K+i)\theta}(i-\frac{c}{2}+K),\\
x=&e^{(\frac{c}{2}+K+i)\theta}(i-\frac{c}{2}-K)-e^{(\frac{c}{2}-K+i)\theta}(i-\frac{c}{2}+K).\\
\end{split}
\end{equation}
According to the existence of complete solution, we can separate into the following 2 cases:
\begin{itemize}
\item $c^2>4(1-d)$, $d\leq1$. No complete solutions. 
\item $d\geq1$. Up to rotation and scaling, there exists a complete solution. 
Note that when $c=0$, the complete solution is derived in \cite{DLW} as
\begin{equation}
\begin{split}
x&=(-\alpha\sinh(\alpha\theta)\cos\theta-\cosh(\alpha\theta)\sin\theta,-\alpha\sinh(\alpha\theta)\sin\theta+\cosh(\alpha\theta)\cos\theta)
\end{split}
\end{equation}
even though they didn't point out it's complete without any cusp.
\end{itemize}

\end{document}